\theoremstyle{plain}
\newtheorem{theorem}{Theorem}[section]
\newtheorem{corollary}[theorem]{Corollary}
\theoremstyle{definition}
\newtheorem{example}[theorem]{Example}
\theoremstyle{remark}
\newcommand{\E}{\mathbb{E}}
\newcommand{\N}{\mathbb{N}}
\newcommand{\R}{\mathbb{R}}
\newcommand{\PP}{\mathbb{P}}
\newcommand{\XX}{\mathbf{X}}
\newcommand{\ww}{\mathbf{w}}
\newcommand{\xx}{\mathbf{x}}
\newcommand{\zz}{\mathbf{z}}
\newcommand{\eqd}{\stackrel{\mathrm{d}}=}
\newcommand{\Var}{\mathrm{Var}}
\newcommand{\de}{\mathrm{\,d}}
\newcommand{\supp}{\mathop{\mathrm{supp}}}
\definecolor{light-gray}{gray}{0.95}
\definecolor{darkblue}{rgb}{0,0,.5}
\definecolor{foxred}{rgb}{0.7, 0.11, 0.11}
\newcommand{\Mail}[1]{{\color{foxred} #1}}
\begin{document}


\title{\bf A dimension reduction for extreme types \\ of directed dependence}

\author{%
	Sebastian Fuchs\footnote{University of Salzburg, Austria, Email: \Mail{sebastian.fuchs@plus.ac.at}}
	\qquad
	Carsten Limbach\footnote{University of Salzburg, Austria, Email: \Mail{carsten.limbach@plus.ac.at}} 
	}
	\vspace{5mm}

\maketitle

\begin{abstract}
In recent years, a variety of novel measures of dependence have been introduced being capable of characterizing diverse types of directed dependence, 
hence diverse types of how a number of predictor variables $\mathbf{X} = (X_1, \dots, X_p)$, \(p \in \N\), may affect a response variable $Y$. 
This includes perfect dependence of $Y$ on $\XX$ and independence between $\XX$ and $Y$, but also less well-known concepts such as zero-explainability, stochastic comparability and complete separation.
Certain such measures offer a representation in terms of the Markov product $(Y,Y')$, with $Y'$ being a conditionally independent copy of $Y$ given $\XX$. 
This dimension reduction principle allows these measures to be estimated via the powerful nearest neighbor based estimation principle introduced in \cite{chatterjee2021AoS}.
To achieve a deeper insight into the dimension reduction principle, this paper aims at translating the extreme variants of directed dependence, typically formulated in terms of the random vector $(\XX,Y)$, into the Markov product $(Y,Y')$.
\end{abstract}

\noindent%
{\it Keywords:}  
conditional distributions, 
complete separation,
directed dependence,
Markov product, \linebreak
perfect dependence


\section{Introduction}

Quantifying directed dependence constitutes a cornerstone of dependence modeling. 
The term `directed dependence' thereby covers a wide range of different types, each of which requires a specific measure of directed dependence for its evaluation.
We here focus on three such measures that, while capable of quantifying very different forms of directed dependence, share a common characteristic:
\begin{enumerate}
\item 
Azadkia and Chatterjee's \emph{simple measure of conditional dependence} $\xi$ introduced in \cite{chatterjee2021AoS} is given (in its unconditional form), for $Y$ being non-degenerate, by
\begin{align}\label{Eq.Chatterjee}
  \xi(Y|\XX) 
  & := \frac{\int_{\mathbb{R}} \operatorname{Var}(\PP(Y \geq y \mid \mathbf{X})) \, \de \PP^Y(y)}
{\int_{\mathbb{R}} \operatorname{Var}(\mathds{1}_{\{Y \geq y\}}) \, \de \PP^Y(y)}\,.
\end{align}
The coefficient \(\xi\) takes on values in the interval \([0,1]\). 
Moreover, due to \cite{chatterjee2021AoS}
\begin{enumerate}[(i)]
\item $\xi(Y|\XX)=0$ if and only if $Y$ and $\XX$ are independent. 
\item $\xi(Y|\XX)=1$ if and only if $Y$ \emph{perfectly depends} on $\XX$, i.e.~there exists some measurable function $f$ such that $Y = f(\XX)$ almost surely.
\end{enumerate}
In a regression setting, $\xi(Y|\XX)$ determines the extent of functional dependence of $Y$ given the information contained in the predictor variables \(\XX = (X_1,\dots,X_p)\).

\item 
Pearson's \emph{correlation ratio} denoted by $R^2$ and introduced in \cite{pearson1905} is given, for $Y \in L^2$ being non-degenerate, by
\begin{align}\label{Eq.Pearson}
  R^2(Y|\XX) 
  & := \frac{\operatorname{Var}(\mathbb{E}(Y|\mathbf{X}))}{\operatorname{Var}(Y)}\,.
\end{align}
The coefficient $R^2$ takes on values in the interval \([0,1]\).
Moreover, due to \cite{ansari2023DepM}
\begin{enumerate}[(i)]
\item $R^2(Y|\XX)=0$ if and only if $Y$ is not explainable through $\XX$ (\emph{zero-explainability}), 
i.e. \linebreak $\mathbb{E}(Y|\mathbf{X}) = \mathbb{E}(Y)$ almost surely.  
\item $R^2(Y|\XX)=1$ if and only if $Y$ perfectly depends on $\XX$.
\end{enumerate}
$R^2$ 
determines the proportion of variance that is explained by the regression function $r(\xx) = \mathbb{E}(Y|\mathbf{X} = \xx)$; see also \cite{fuchs2023JMVA, shih2024, emura2021}.
A value $R^2(Y|\XX)=0$ indicates no variability in the conditional expectation. 
In a regression model this means that the predictor vector $\XX$ provides no explanation for the variance of the response variable $Y$.

\item 
The \emph{coefficient of separation} $\Lambda$ introduced in \cite{fuchs2025} is given, for $Y$ and at least one coordinate of $\XX$ being non-degenerate, by 
\begin{align}\label{Eq.Fuchs}
  \hspace{-10mm}
	\Lambda(Y|\XX)
  & := \frac{4}{1-\PP(\XX = \XX^\ast)} 
       \left( \; \int_{\R^p \times \R^p} \left( \Psi \big(\PP^{Y|\XX=\xx_1}, \PP^{Y|\XX=\xx_2}\big) - \frac{1}{2} \right)^2 
      \de (\PP^\XX \otimes \PP^\XX) (\xx_1,\xx_2) \right),
\end{align}
where \(\XX^\ast\) denotes an independent copy of \(\XX\).
Here, $\Psi$ denotes the relative effect \cite{birnbaum1957, Brunner2019, mann1947, wilcoxon1945} given, for two random variables $Z_1$ and $Z_2$, by
\begin{align*} 
  \Psi(\PP^{Z_1}, \PP^{Z_2}) 
  & := \int_{\mathbb{R}} \PP(Z_1 < z) + \frac{1}{2} \; \PP(Z_1 = z) \; \de \PP^{Z_2}(z)\,.
\end{align*} 
The relative effect $\Psi(\PP^{Z_1}, \PP^{Z_2})$ determines the stochastic tendency of $Z_2$ to take on greater values than $Z_1$.
It is a well-established statistical tool employed in medicine \cite{schimke2022severe} and the social sciences \cite{seidel2014modeling} for comparing the distributions of two treatment groups.
If $Z_2$ shows no (stochastic) tendency to take on greater or smaller values than $Z_1$, then \(\Psi(\PP^{Z_1}, \PP^{Z_2}) = 1/2\) and $Z_1$ and $Z_2$ are said to be \emph{stochastically comparable} \citep{Brunner2019}. 
If, instead, \(\Psi(\PP^{Z_1}, \PP^{Z_2}) \in \{0,1\}\), then $Z_1$ and $Z_2$ are said to be \emph{completely separated} \citep{Brunner2019}.
The coefficient $\Lambda$ is a proper generalization of $\Psi$ to an arbitrary number of treatment groups and takes on values in the interval \([0,1]\).
Moreover, due to \cite{fuchs2025}
\begin{enumerate}[(i)]
\item $\Lambda(Y|\XX)=0$ if and only if $Y$ is \emph{stochastically comparable} relative to $\XX$, i.e.
\linebreak $ \Psi \big(\PP^{Y|\XX=\xx_1}, \PP^{Y|\XX=\xx_2}\big) = 1/2$
for all almost all $(\xx_1,\xx_2)$ with $\xx_1 \neq \xx_2$.
\item $\Lambda(Y|\XX)=1$ if and only if \(Y\) is \emph{completely separated} relative to \(\XX\), i.e.
\linebreak $\Psi \big(\PP^{Y|\XX=\xx_1}, \PP^{Y|\XX=\xx_2}\big) \in \{0,1\}$ for almost all $(\xx_1,\xx_2)$ with $\xx_1 \neq \xx_2$.
\end{enumerate}
A value $\Lambda(Y|\XX)=0$ indicates that the values of $Y$ show no location effect relative to $\XX$ (cf.~Examples \ref{stochasticcomp} - \ref{Vkick}). 
Instead, a value $\Lambda(Y|\XX)=1$ describes the situation when the supports of the conditional distributions are almost surely pairwise disjoint.
\end{enumerate}
Clearly, independence between $\XX$ and $Y$ implies $\xi(Y|\XX) = R^2(Y|\XX) = \Lambda(Y|\XX) = 0$. 
The reverse implication, however, only applies to $\xi$ as mentioned above.
For counterexamples concerning $R^2$ and $\Lambda$ we refer to Section~\ref{Sec.MinimumValue}.
In contrast, perfect dependence and complete separation are generally not connected as is illustrated in Section~\ref{Sec.MaximumValue}.

The three aforementioned measures of directed dependence share a common characteristic: in fact, they all are functionals of the Markov product resulting in a statistical problem with reduced dimension:
Denote by \(Y'\) a conditional independent copy of \(Y\) given \(\XX\,,\) i.e.
\begin{align}\label{MP}
  (Y'|\XX = \xx ) \eqd (Y|\XX = \xx) 
  \textrm{ for } \PP^\XX\text{-almost all } \xx\in \R^p \text{ and } Y \perp Y' \mid \XX \,,
\end{align}
where \(\eqd\) indicates equality in distribution and \(Y \perp Y' \mid \XX\) denotes conditional independence of \(Y\) and \(Y'\) given \(\XX\). 
Then, according to \cite{ansari2025Cont,fuchs2023JMVA},
\begin{align}\label{MP:DF}
    \PP (Y \leq y, Y' \leq y')
    & = \E \big( \PP (Y \leq y \, | \, \XX) \, \PP (Y' \leq y' \, | \, \XX) \big)
\end{align}
for all $(y,y') \in \R^2$; we refer to \cite{fuchs2023JMVA, limbach2024} for more background on how the transformation affects certain distributions.
Moreover,
\begin{enumerate}
\item \(\xi\) fulfills
\begin{align}\label{Xi:Rep}
  \xi(Y|\XX) 
  & = a \int_{\R} \PP(Y < y, Y' < y) \de \PP^Y(y) - b
\end{align}
with positive constants 
\(a := (\int_{\R} \Var(\mathds{1}_{\{Y\geq y\}})\de \PP^Y(y))^{-1}\) and 
\(b := a \int_{\R} \PP(Y<y)^2 \de \PP^Y(y)\); see \cite{ansari2023MFOCI}. 

\item\(\R^2\) fulfills
\begin{align}\label{R2:Rep}
  R^2(Y|\XX) 
  & = \rho_P(Y,Y')
\end{align}
where $\rho_P$ denotes Pearson's correlation coefficient; see \cite{fuchs2023JMVA} and \cite{shih2024}.

\item \(\Lambda\) fulfills
\begin{align}\label{Lambda:Rep}
  \Lambda(Y|\XX)
  = \frac{\PP \big( (Y_1 - Y_2) (Y_1' - Y_2') > 0 \big) 
             - \PP \big( (Y_1 - Y_2) (Y_1' - Y_2') < 0 \big)}{1-\PP(\XX = \XX^\ast)} 
\end{align}
where \((Y_1,Y_1')\) and \((Y_2,Y_2')\) denote independent copies of \((Y,Y')\);
see \cite{fuchs2025}.
\end{enumerate}

From a statistical perspective, the dimension-reduced representations \eqref{Xi:Rep}, \eqref{R2:Rep} and \eqref{Lambda:Rep} via the Markov product have the advantage that the measures of directed dependence \eqref{Eq.Chatterjee}, \eqref{Eq.Pearson} and \eqref{Eq.Fuchs} can be estimated via the nearest neighbor based estimation principle introduced in \cite{chatterjee2021AoS}.
This ensures a strongly consistent, fully non-parametric estimation with no tuning parameters and a computation time of order $O(n \log n)$.

\begin{table}[b]
\begin{center}
\caption{Overview of minimum and maximum value characterizing dependence concepts and their translation into the Markov product.
\label{Overview}}
\begin{tabular}{|l|l|l|}
\hline
Random vector $(\XX,Y)$
& Markov product $(Y,Y')$
& Reference
\\
\hline\hline
$Y$ and $\XX$ are independent  
& $Y$ and $Y'$ are independent
& Theorem \ref{Char:Indep}
\\
\hline
$Y$ is not explainable through $\XX$  
& $Y$ and $Y'$ are uncorrelated
& Corollary \ref{Char:Pearson0}
\\
\hline
$Y$ is stochastic comparable relative to $\XX$  
& The probability of concordance of \((Y,Y')\) equals 
& Corollary \ref{Char:StochComp}
\\
& the probability of discordance of \((Y,Y')\)
&
\\
\hline
$Y$ perfectly depends on $\XX$  
& $Y$ and $Y'$ are comonotone
& Theorem \ref{Thm.Charact:PD}
\\
\hline
$Y$ is completely separated relative to $\XX$  
& $(F_Y(Y),F_Y(Y'))$ admits an ordinal sum structure
& Theorem \ref{Thm.Charact:CS}
\\
\hline
\end{tabular}
\end{center}
\end{table}

Given that each of the above measures of directed dependence admits a Markov product representation, the minimum value (Section \ref{Sec.MinimumValue}) and maximum value (Section \ref{Sec.MaximumValue}) characterizing dependence concepts formulated for the random vector \((\XX,Y)\) can now be translated into dependence concepts for \((Y,Y')\).
This is what this work aims to achieve.
The results are summarized in Table \ref{Overview}; we denote by \(F_Z\) the distribution function of a random variable \(Z\).

\section{Minimum value characterizing dependence concepts} \label{Sec.MinimumValue}

We start with translating independence between \(\XX\) and \(Y\) to the Markov product $(Y,Y')$: 
Theorem \ref{Char:Indep} below verifies that \(\XX\) and \(Y\) are independent if and only if \(Y\) and \(Y'\) are independent.
For random vectors \((\XX,Y)\) having a continuous cdf, the result is due to \cite[Theorem 1]{fuchs2023JMVA}.

\begin{theorem}[Characterizing independence] \label{Char:Indep}
Consider the random vector \((\XX,Y)\) and its Markov product \((Y,Y')\).
Then the following statements are equivalent:
\begin{enumerate}[(a)]
\item \(\xi(Y|\XX)=0\).
\item\label{Indep:XY} \(\XX\) and \(Y\) are independent.
\item\label{Indep:YY'} \(Y\) and \(Y'\) are independent.
\end{enumerate}
\end{theorem}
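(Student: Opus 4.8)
The equivalence of (a) and \eqref{Indep:XY} is already recorded in the introduction (due to \cite{chatterjee2021AoS}), so the plan is to prove only the new equivalence \eqref{Indep:XY} $\Leftrightarrow$ \eqref{Indep:YY'}. I would first rewrite the defining identity \eqref{MP:DF} in a form that isolates the conditional distribution function. Abbreviating $g_y(\XX) := \PP(Y \le y \mid \XX)$ and using that \eqref{MP} makes $Y'$ a conditionally independent copy of $Y$ given $\XX$, so that $\PP(Y' \le y' \mid \XX) = g_{y'}(\XX)$ almost surely, relation \eqref{MP:DF} becomes
\begin{align*}
  \PP(Y \le y, Y' \le y') = \E\big( g_y(\XX)\, g_{y'}(\XX) \big),
\end{align*}
whereas the two coinciding marginals are $F_Y(y) = \E(g_y(\XX))$ and $F_Y(y') = \E(g_{y'}(\XX))$. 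Thus independence of $Y$ and $Y'$ is equivalent to $\E(g_y(\XX)\, g_{y'}(\XX)) = \E(g_y(\XX))\,\E(g_{y'}(\XX))$ for all $(y,y') \in \R^2$, i.e.\ to the vanishing of $\operatorname{Cov}(g_y(\XX), g_{y'}(\XX))$ for every pair $(y,y')$.

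For the implication \eqref{Indep:XY} $\Rightarrow$ \eqref{Indep:YY'}, independence of $\XX$ and $Y$ forces $g_y(\XX) = F_Y(y)$ almost surely for every $y$; substituting this constant into the displayed joint distribution function factorizes it as $F_Y(y)\,F_Y(y')$, which is precisely independence of $Y$ and $Y'$.

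The converse \eqref{Indep:YY'} $\Rightarrow$ \eqref{Indep:XY} is the substantive step. I would specialize the covariance condition to the diagonal $y = y'$, obtaining $\operatorname{Var}(g_y(\XX)) = 0$ and hence $g_y(\XX) = F_Y(y)$ almost surely, for each fixed $y$. The remaining task — and the point I expect to be the main obstacle — is to upgrade this family of ``for each $y$, almost sure'' statements, which a priori involves an uncountable collection of null sets, into a single ``almost surely, for all $y$'' statement. To do so I would restrict to a countable dense set such as $\mathbb{Q}$, discard the corresponding countable union of null sets, and then exploit the monotonicity and right-continuity of $y \mapsto g_y(\XX)$ to extend the equality $g_y(\XX) = F_Y(y)$ to all $y \in \R$ outside one null event. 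Equality of the conditional and the unconditional distribution functions is exactly independence of $\XX$ and $Y$, which closes the equivalence.

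As a cross-check on the implications I would also note that (a) connects directly to \eqref{Indep:YY'} through \eqref{Xi:Rep}: rewriting that representation shows $\xi(Y|\XX)$ to be a positive multiple of $\int_\R \operatorname{Var}(\PP(Y < y \mid \XX))\,\de \PP^Y(y)$, so that $\xi(Y|\XX)=0$ encodes the very same vanishing-variance condition, now holding for $\PP^Y$-almost all $y$. This reconciles with the diagonal argument above (up to the harmless distinction between $\PP(Y<y\mid\XX)$ and $\PP(Y\le y\mid\XX)$ at atoms) and confirms that all three conditions pin down the single requirement $\operatorname{Var}(\PP(Y\le y\mid\XX))=0$.
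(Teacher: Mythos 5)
Your proposal is correct and follows essentially the same route as the paper's proof: the forward direction by substituting the constant conditional cdf into \eqref{MP:DF}, and the converse by specializing to the diagonal $y=y'$ and deducing $\operatorname{Var}(\PP(Y\le y\mid\XX))=0$ for each $y$ (the paper phrases this via H\"older's inequality rather than covariance, which is the same computation). The quantifier-swap step you flag as the main obstacle is in fact dispensable, since independence only requires the product formula $\PP(Y\le y,\XX\in A)=\PP(Y\le y)\,\PP(\XX\in A)$ for each fixed $y$ separately, which follows directly from the per-$y$ almost-sure identity.
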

\begin{proof}
It remains to prove that \eqref{Indep:XY} and \eqref{Indep:YY'} are equivalent. 
We first assume that \eqref{Indep:XY} holds.
Then \eqref{MP:DF} yields
\begin{align*}
\hspace{-10mm}
    \PP(Y\leq y, Y' \leq y') 
    & = \E \big( \PP (Y \leq y \, | \, \XX) \, \PP (Y' \leq y' \, | \, \XX) \big)
      = \E ( \PP (Y \leq y) \, \PP (Y' \leq y') )
      = \PP (Y \leq y) \, \PP (Y' \leq y')
\end{align*}
for all \((y,y') \in \R^2\), hence \(Y\) and \(Y'\) are independent.
Now, assume that \eqref{Indep:YY'} holds.
Then \eqref{MP} and \eqref{MP:DF} yield \linebreak
$\PP(Y\leq y)^2 
 = \PP(Y\leq y, Y'\leq y) 
 = \E \big ( \PP(Y\leq y \, | \, \XX) \, \PP(Y'\leq y \, | \, \XX) \big) 
 = \E \big ( \PP(Y\leq y \, | \, \XX)^2 \big) \geq \E \big ( \PP(Y\leq y \, | \, \XX) \big)^2
      =  \PP(Y\leq y)^2 $ 
for all \(y \in \R\), where the inequality is due to Hölder's inequality.
This then implies 
$$ 0 = \E \big ( \PP(Y\leq y \, | \, \XX)^2 - \PP(Y\leq y)^2 \big)  
     = \E \big ( (\PP(Y\leq y \, | \, \XX) - \PP(Y\leq y))^2 \big) $$
from which we conclude that 
\( \PP(Y \leq y) = \PP(Y\leq y \, | \, \XX = \xx) \) for all $y \in \R$ and \(\PP^\XX\)-almost all \(\xx \in \R^p\).
Thus, \(\XX\) and \(Y\) are independent.
\end{proof}

The next theorem states that $Y$ is not explainable through \(\XX\) if and only if $Y$ and $Y'$ are uncorrelated. The result is immediate from Eq.~\eqref{R2:Rep}.

\begin{corollary}[Characterizing zero-explainability] \label{Char:Pearson0}
Consider the random vector \((\XX,Y)\) and its Markov product \((Y,Y')\).
Then the following statements are equivalent:
\begin{enumerate}[(a)]
\item \(R^2(Y|\XX)=0\).
\item $Y$ is not explainable through \(\XX\).
\item \(Y\) and \(Y'\) are uncorrelated.
\end{enumerate}
\end{corollary}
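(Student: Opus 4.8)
The plan is to reduce everything to the Markov product representation \eqref{R2:Rep}. The equivalence of (a) and (b) requires no new work: it is the characterization of zero-explainability already recorded in the introduction, due to \cite{ansari2023DepM}, which states that $R^2(Y|\XX)=0$ holds if and only if $\E(Y|\XX)=\E(Y)$ almost surely. Hence the only content to be proved is the link of these two statements to (c), i.e.\ that $R^2(Y|\XX)=0$ is equivalent to $Y$ and $Y'$ being uncorrelated.

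First I would verify that Pearson's correlation coefficient $\rho_P(Y,Y')$ is well defined. Since $Y\in L^2$ is non-degenerate we have $\Var(Y)>0$, and because $Y'$ shares the marginal law of $Y$ by \eqref{MP}, also $\Var(Y')=\Var(Y)>0$; thus the denominator of $\rho_P(Y,Y')$ does not vanish. By definition $Y$ and $Y'$ are uncorrelated precisely when $\rho_P(Y,Y')=0$. Invoking \eqref{R2:Rep}, namely $R^2(Y|\XX)=\rho_P(Y,Y')$, the equivalence (a) $\Leftrightarrow$ (c) then follows in a single step.

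Should a self-contained derivation be preferred over merely quoting \eqref{R2:Rep}, I would instead compute the covariance directly through the Markov product: conditioning on $\XX$ and using $Y\perp Y'\mid\XX$ together with $\E(Y'\mid\XX)=\E(Y\mid\XX)$ gives $\E(YY')=\E(\E(Y\mid\XX)^2)$, whence $\operatorname{Cov}(Y,Y')=\Var(\E(Y\mid\XX))$. Dividing by $\Var(Y)=\Var(Y')$ recovers the defining ratio \eqref{Eq.Pearson} and confirms $\rho_P(Y,Y')=R^2(Y|\XX)$.

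I do not anticipate a genuine obstacle, as the statement is essentially a reformulation of \eqref{R2:Rep}. The only points needing care are definitional: one must lean on the standing assumption $Y\in L^2$ and on non-degeneracy so that both $R^2$ and the notion of being uncorrelated are meaningful, transferring non-degeneracy from $Y$ to $Y'$ via \eqref{MP}. With these in place the three-way equivalence closes at once.
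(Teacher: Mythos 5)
Your proposal is correct and matches the paper, which likewise dispenses with the corollary by noting it is immediate from the representation $R^2(Y|\XX)=\rho_P(Y,Y')$ in \eqref{R2:Rep}. Your optional self-contained computation of $\operatorname{Cov}(Y,Y')=\Var(\E(Y|\XX))$ via conditioning on $\XX$ is a valid bonus but not needed beyond what the paper already records.
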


Finally, we characterize stochastic comparability of $Y$ relative to $\XX$ in terms of the probability of concordance and the probability of discordance of the random vector \((Y,Y')\). The result is immediate from Eq.~\eqref{Lambda:Rep}.

\begin{corollary}[Characterizing stochastic comparability] \label{Char:StochComp}
Consider the random vector \((\XX,Y)\) and its Markov product \((Y,Y')\).
Then the following statements are equivalent:
\begin{enumerate}[(a)]
\item \(\Lambda(Y|\XX)=0\).
\item $Y$ is stochastically comparable relative to $\XX$.
\item The probability of concordance of \((Y,Y')\) coincides with the probability of discordance of \((Y,Y')\), i.e.~\(\PP \big( (Y_1 - Y_2) (Y_1' - Y_2') > 0 \big) = \PP \big( (Y_1 - Y_2) (Y_1' - Y_2') < 0 \big)\) where \((Y_1,Y_1')\) and \((Y_2,Y_2')\) denote independent copies of \((Y,Y')\).
\end{enumerate}
\end{corollary}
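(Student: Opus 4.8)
The plan is to treat the three asserted equivalences as two essentially separate pieces. The equivalence of (a) and (b) is nothing more than the defining characterization of the minimum value of $\Lambda$ already recalled in the introduction (item 3(i), due to \cite{fuchs2025}): $\Lambda(Y|\XX)=0$ if and only if $Y$ is stochastically comparable relative to $\XX$. I would simply invoke this and concentrate all the actual work on the equivalence of (a) and (c).

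For (a) $\Leftrightarrow$ (c), the starting point is the Markov product representation \eqref{Lambda:Rep}, which already exhibits $\Lambda(Y|\XX)$ as a quotient whose numerator is precisely the difference between the probability of concordance and the probability of discordance of $(Y,Y')$. My first step would be to record that the denominator $1-\PP(\XX=\XX^\ast)$ is strictly positive: since $\Lambda$ is only defined when at least one coordinate of $\XX$ is non-degenerate, $\XX$ cannot be almost surely constant, so for its independent copy $\XX^\ast$ one has $\PP(\XX=\XX^\ast)<1$. With a nonzero denominator, $\Lambda(Y|\XX)=0$ holds if and only if the numerator vanishes, that is, if and only if
\[
  \PP \big( (Y_1 - Y_2)(Y_1' - Y_2') > 0 \big) = \PP \big( (Y_1 - Y_2)(Y_1' - Y_2') < 0 \big),
\]
which is exactly statement (c). Together with (a) $\Leftrightarrow$ (b) this closes the cycle.

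The argument is almost entirely formal, so I do not anticipate a genuine obstacle. The only point that is not purely cosmetic is the positivity of the denominator, and even there the sole thing to verify is that non-degeneracy of one coordinate of $\XX$ forces $\PP(\XX=\XX^\ast)<1$. This is a one-line computation from $\PP(\XX=\XX^\ast)=\int_{\R^p}\PP(\XX=\xx)\,\de\PP^\XX(\xx)$, whose value is strictly below one whenever $\PP^\XX$ is not a point mass; this is implicitly guaranteed by the standing non-degeneracy assumption under which $\Lambda$ is defined.
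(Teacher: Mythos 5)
Your proposal is correct and follows the same route as the paper, which simply notes that the result is immediate from Eq.~\eqref{Lambda:Rep} (together with the characterization of $\Lambda(Y|\XX)=0$ from \cite{fuchs2025} for the equivalence of (a) and (b)). Your additional remark on the strict positivity of the denominator $1-\PP(\XX=\XX^\ast)$ is a harmless and accurate elaboration of what the paper leaves implicit.
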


For illustrative purposes, we demonstrate how the three aforementioned minimum value characterizing dependence concepts are interrelated.
It is evident that independence of \(\XX\) and \(Y\) implies that $Y$ is not explainable through \(\XX\) and that \(Y\) is stochastically comparable relative to \(\XX\).
Translated into the Markov product, this means that independence of \(Y\) and \(Y'\) implies that \(Y\) and \(Y'\) are uncorrelated and that the probability of concordance of \((Y,Y')\) coincides with the probability of discordance of \((Y,Y')\).
In both cases, the reverse direction does not generally apply (Example \ref{stochasticcomp}), and the two latter dependence concepts are generally not linked (Examples \ref{stochastic} and \ref{Vkick}).

\begin{example}[Neither stochastic comparability nor zero-explainability implies independence]~~ \label{stochasticcomp}
Consider the random variable $X$ with $\PP(X=-1) = 1 - \PP(X=1) = 4/7$ and the random variable $Y$ given by the conditional distributions
\begin{align*}
  \PP^{Y|X=-1}
  & = \mathcal{U}([-1.5, -0.5] \cup [0.5, 1.5])
  \qquad \textrm{ and }
  & \PP^{Y|X=1}
  & = \mathcal{U}([-0.5, 0.5])\,,
\end{align*}
with $\mathcal{U}(A)$ denoting the uniform distribution on a Borel set $A$.
The Markov product \((Y,Y')\) then is uniformly distributed on the intervals \([-1.5, -0.5]^2, [-1.5, -0.5] \times [0.5, 1.5], [0.5, 1.5] \times [-1.5, -0.5], [0.5, 1.5]^2\) each with probability mass $1/7$ and on \([-0.5, 0.5]^2\) with probability mass $3/7$.
Fig.~\ref{fig:stochasticcomp} depicts scatterplots of sample size 1000 for $(X,Y)$ and $(Y,Y')$, drawing attention to the cross-structure of $(Y,Y’)$ and the uniform distributions within the different squares.
For the values of the measures of directed dependence, we obtain
\begin{align*}
  \xi(Y|X) 
  & > 0
  & R^2(Y|X) 
  & = 0
  & \Lambda(Y|X) 
  & = 0\,,
\end{align*}
which indicates that neither $X$ and $Y$ nor $Y$ and $Y'$ are independent.
However, since $R^2(Y|X)=0=\Lambda(Y|X)$ it holds that $Y$ is not explainable through \(X\) and that $Y$ is stochastically comparable relative to $X$.
Translated into the Markov product, this means that \(Y\) and \(Y'\) are uncorrelated and that the probability of concordance of \((Y,Y')\) coincides with the probability of discordance of \((Y,Y')\).
\end{example}

\begin{figure}[t]
\centering
\includegraphics[scale=0.12]{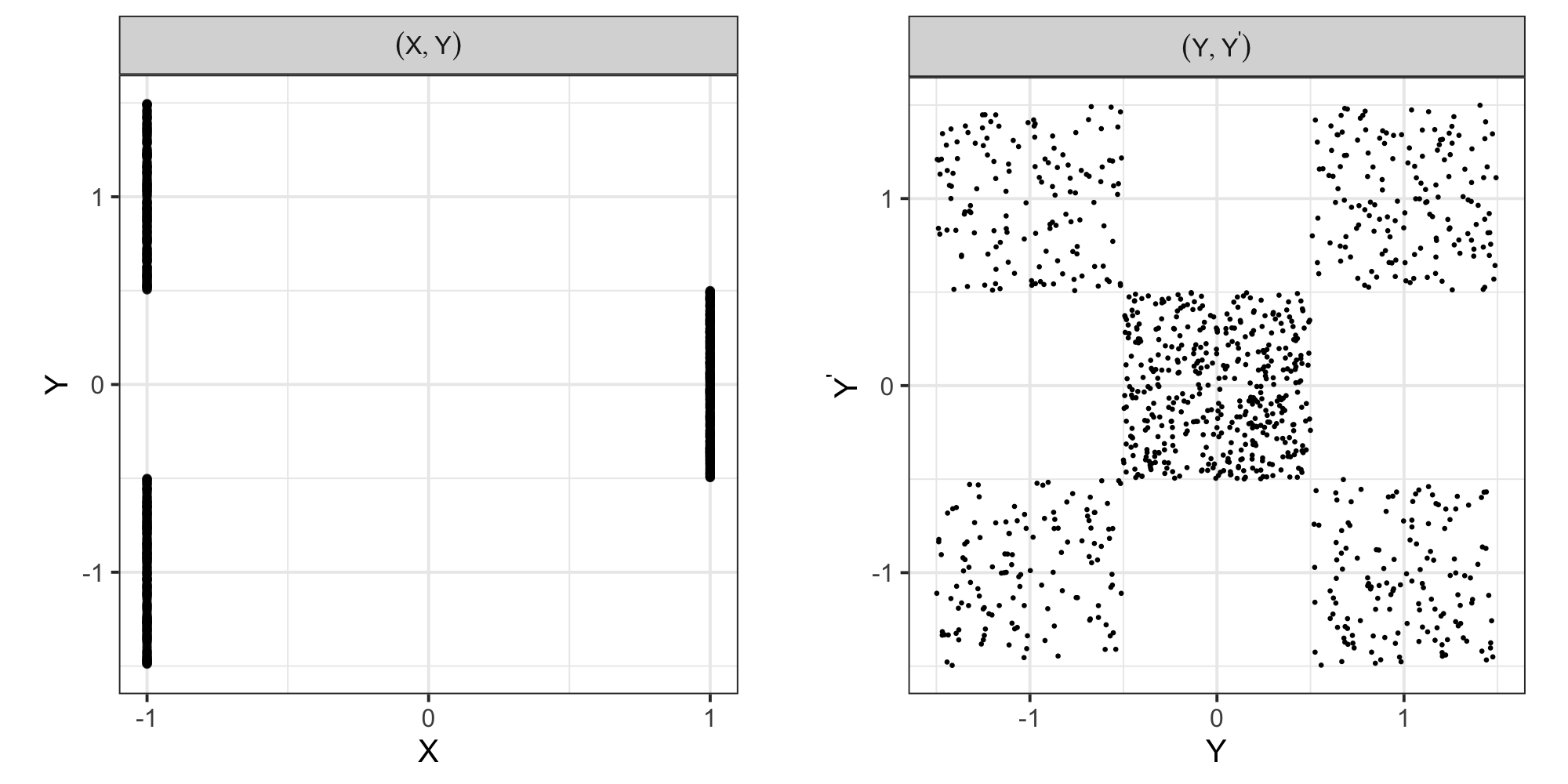}
\caption{Scatterplots of sample size 1000 for the random vector $(X,Y)$ and its Markov product $(Y,Y')$ discussed in Example \ref{stochasticcomp}. 
}
\label{fig:stochasticcomp}
\end{figure}

\begin{example}[Zero-explainability does not imply stochastic comparability]~~ \label{stochastic}
Consider the random variable $X$ with $\PP(X=0) = 1 - \PP(X=1) = 1/3$ and the random variable $Y$ given by the conditional distributions
$\PP^{Y | X=0}= \mathcal{U}([-0.5,0.5])$ and $\PP^{Y | X=1}$,
the latter being a composition of a uniform distribution on $[-1.5, -0.5]$ with probability mass $4/9$ and a uniform distribution on \([1, 3]\) with probability mass \(2/9\).
The Markov product \((Y,Y')\) then is uniformly distributed 
on the interval
\([-1.5, -0.5] \times [1,3], [1,3] \times [-1.5, -0.5]\) each with probability mass $4/27$, 
on \([-1.5, -0.5]^2\) with probability mass $8/27$,  
on \([1,3]^2\) with probability mass $2/27$, and
on \([-0.5,0.5]^2\) with probability mass $1/3$.
Fig.~\ref{stochasticpic} depicts scatterplots of sample size 1000 for $(X,Y)$ and $(Y,Y')$, drawing attention to the uniform distributions within the different rectangles.
For the values of the measures of directed dependence, we obtain
\begin{align*}
  R^2(Y|X) 
  & = 0
  & \Lambda(Y|X) 
  & > 0\,, 
\end{align*}
indicating that $Y$ is not explainable through \(X\), however, $Y$ fails to be stochastically comparable relative to $X$.
Translated into the Markov product, this means that \(Y\) and \(Y'\) are uncorrelated, however, the probability of concordance of \((Y,Y')\) fails to coincides with the probability of discordance of \((Y,Y')\).
\end{example}

\begin{figure}[t]
\centering
\includegraphics[scale=0.12]{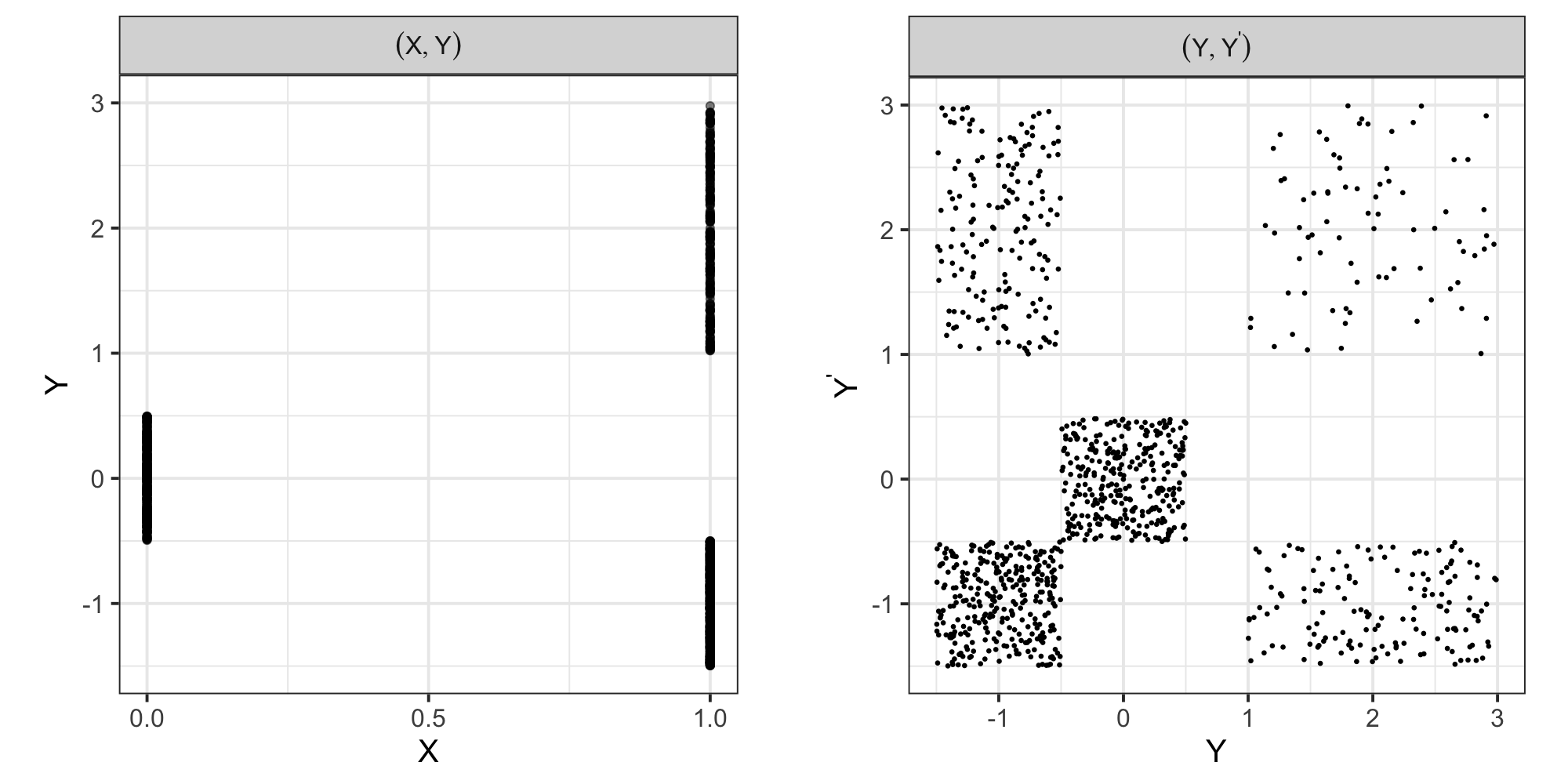}
\caption{Scatterplots of sample size 1000 for the random vector $(X,Y)$ and its Markov product $(Y,Y')$ discussed in Example \ref{stochastic}.
}
\label{stochasticpic}
\end{figure}

\begin{example}[Stochastic comparability does not imply zero-explainability]~~ \label{Vkick}
Consider the random variables $X \sim \mathcal{U}[0,1]$ and 
$Y = 1/2 + 1/2 \cdot Z \cdot X$ where $\PP(Z=-1) = 1/2 = \PP(Z = 1)$, $Z$ being independent of $(X,Y)$.
Then $(Y,Y')$ is cross-shaped.
Fig.~\ref{Vkickpic} depicts scatterplots of sample size 1000 for $(X,Y)$ and $(X,Y^2)$ along with their Markov products.
For the values of the measures of directed dependence, we obtain
\begin{align*}
  R^2(Y|X) 
  & = 0 < R^2(Y^2|X)
  & \Lambda(Y|X) 
  & = 0 = \Lambda(Y^2|X)\,.
\end{align*}
More precisely, \(R^2(Y^2|X) = 1/16\); see \cite[Example 5]{fuchs2023JMVA}.
This indicates that $Y^2$ is stochastically comparable relative to $X$, however, $Y^2$ fails to be not explainable through \(X\).
Translated into the Markov product, this means that the probability of concordance of \((Y^2,(Y^2)')\) coincides with the probability of discordance of \((Y^2,(Y^2)')\), however, \(Y^2\) and \((Y^2)'\) are not uncorrelated.
\end{example}

\begin{figure}[t]
\includegraphics[scale=0.14]{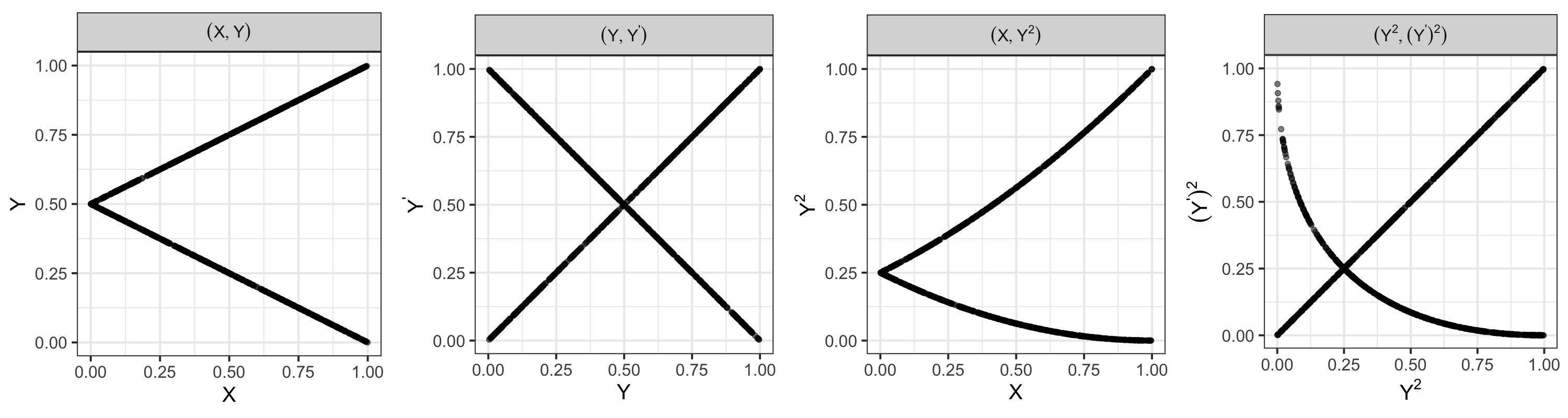}
\caption{Scatterplots of sample size 1000 for the random vectors $(X,Y)$ and $(X,Y^2)$ along with their corresponding Markov products discussed in Example \ref{Vkick}.}
\label{Vkickpic}
\label{Vkickpic2}
\end{figure}

\section{Maximum value characterizing dependence concepts} \label{Sec.MaximumValue}

We first present a characterization of perfect dependence in terms of the Markov product: 
Theorem \ref{Thm.Charact:PD} below states that 
\(Y\) perfectly depends on \(\XX\) if and only if 
\(Y\) and \(Y'\) are comonotone if and only if 
\(Y = Y'\) almost surely.
The last equivalence is remarkable in that comonotonicity, i.e~perfect monotone dependence, together with $Y \eqd Y'$ implies equality almost surely.
For random vectors \((\XX,Y)\) having a continuous cdf, the first equivalence is due to \cite[Theorem 1]{fuchs2023JMVA}.

\begin{theorem}[Characterizing perfect dependence]\label{Thm.Charact:PD}
Consider the random vector \((\XX,Y)\) and its Markov product \((Y,Y')\).
Then the following statements are equivalent:
\begin{enumerate}[(a)]
\item \(\xi(Y|\XX) = R^2(Y|\XX) = 1\).
\item\label{PD:XY} \(Y\) perfectly depends on \(\XX\).
\item\label{PD:YY'} \(Y\) and \(Y'\) are comonotone.
\item\label{PD:YY'2} \(Y = Y'\) almost surely.
\end{enumerate}
\end{theorem}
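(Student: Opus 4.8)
The plan is to treat the equivalence $(a)\Leftrightarrow(b)$ as already settled: since $\xi(Y|\XX)=1$ characterizes perfect dependence \cite{chatterjee2021AoS} and, for $Y\in L^2$, so does $R^2(Y|\XX)=1$ (both recalled in the introduction), condition $(a)$ is merely a restatement of $(b)$. All the real work therefore concerns the three conditions $(b)$, $(c)$, $(d)$, and I would route everything through a single scalar identity together with one elementary reduction lemma.

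First I would record a master identity obtained from \eqref{MP:DF}. Writing $g_y(\XX):=\PP(Y\le y\mid\XX)$ and using the defining properties \eqref{MP} of the Markov product (so that $\PP(Y'\le y\mid\XX)=g_y(\XX)$ and, marginally, $Y'\eqd Y$), setting $y'=y$ in \eqref{MP:DF} gives
\begin{align*}
\PP(Y\le y,\,Y'>y) = \PP(Y'\le y,\,Y>y) = \E\big(g_y(\XX)\,(1-g_y(\XX))\big)
\end{align*}
for every $y\in\R$, where the second equality uses $\PP(Y\le y)=\PP(Y'\le y)$. The integrand on the right is nonnegative, and this will be the engine of the whole argument. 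Next I would prove the reduction lemma: $Y=Y'$ almost surely if and only if $\PP(Y\le y,\,Y'>y)=0$ for every $y\in\R$. The nontrivial direction decomposes $\{Y<Y'\}=\bigcup_{q\in\mathbb{Q}}\{Y\le q<Y'\}$ and appeals to countable subadditivity, with the symmetric term $\{Y>Y'\}$ handled by the second equality above.

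With these tools the equivalences follow quickly. For $(b)\Rightarrow(d)$: perfect dependence forces $g_y(\XX)\in\{0,1\}$ almost surely, so the master identity yields $\PP(Y\le y,Y'>y)=0$ for all $y$, and the reduction lemma gives $Y=Y'$. For $(d)\Rightarrow(b)$: from $Y=Y'$ the reduction lemma gives $\PP(Y\le y,Y'>y)=0$, hence $\E\big(g_y(\XX)(1-g_y(\XX))\big)=0$; nonnegativity of the integrand forces $g_y(\XX)\in\{0,1\}$ almost surely for each fixed $y$. I would then fix a countable dense set of values $y$, discard the corresponding null sets, and use monotonicity and right-continuity of $y\mapsto\PP(Y\le y\mid\XX=\xx)$ to conclude that the regular conditional law $\PP^{Y\mid\XX=\xx}$ is a point mass for $\PP^\XX$-almost all $\xx$; its atom defines a measurable $f$ with $Y=f(\XX)$ almost surely. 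Finally $(d)\Rightarrow(c)$ is immediate, since the diagonal support is comonotone, while for $(c)\Rightarrow(d)$ comonotonicity together with $Y\eqd Y'$ forces the joint distribution function to equal $\min(F_Y(y),F_Y(y'))$, whence $\PP(Y\le y,Y'>y)=F_Y(y)-F_Y(y)=0$ and the reduction lemma closes the loop.

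The main obstacle is the measure-theoretic step inside $(d)\Rightarrow(b)$: upgrading the pointwise-in-$y$ statement ``$g_y(\XX)\in\{0,1\}$ off a $y$-dependent null set'' to the uniform statement that the entire conditional distribution is degenerate off a \emph{single} null set is precisely where the countable-dense-set plus right-continuity argument is needed, and it is what makes the extraction of the function $f$ rigorous. Conceptually the noteworthy point is $(c)\Rightarrow(d)$ --- that comonotonicity together with equal marginals forces equality almost surely --- but once the reduction lemma is in place this costs essentially nothing.
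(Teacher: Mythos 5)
Your proposal is correct, and its engine is the same one the paper uses: the second-moment identity coming from \eqref{MP:DF} evaluated on the diagonal, which turns the degeneracy of the conditional laws into the vanishing of $\E\big(\PP(Y\le y\mid\XX)(1-\PP(Y\le y\mid\XX))\big)$. The organization differs, though, in a way worth noting. The paper pivots on condition \eqref{PD:YY'}: it proves \eqref{PD:XY}$\Rightarrow$\eqref{PD:YY'} by computing the full joint cdf $\PP(Y\le y,Y'\le y')=\min\{F_Y(y),F_{Y'}(y')\}$, proves \eqref{PD:YY'}$\Rightarrow$\eqref{PD:XY} via the diagonal identity plus a sup/inf argument extracting the atom $c_\xx$, and gets \eqref{PD:YY'}$\Rightarrow$\eqref{PD:YY'2} by observing that comonotonicity with equal marginals forces $\PP^{(Y,Y')}$ to be the pushforward of $\PP^Y$ under $z\mapsto(z,z)$. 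You instead pivot on \eqref{PD:YY'2} and replace the pushforward argument by your reduction lemma ($Y=Y'$ a.s.\ iff $\PP(Y\le y,Y'>y)=0$ for all $y$, via the rational decomposition of $\{Y<Y'\}$); this lets you handle \eqref{PD:XY}$\Leftrightarrow$\eqref{PD:YY'2} and \eqref{PD:YY'}$\Leftrightarrow$\eqref{PD:YY'2} with one scalar identity and avoids computing the off-diagonal joint cdf altogether. A small point in your favour: in the step from ``$g_y(\XX)\in\{0,1\}$ a.s.\ for each fixed $y$'' to ``the conditional law is a point mass off a single null set,'' you make the countable-dense-set plus right-continuity argument explicit, whereas the paper performs the quantifier interchange silently before introducing $a_\xx$ and $b_\xx$. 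Both routes are complete and of comparable length.
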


\begin{proof}
It remains to prove that \eqref{PD:XY}, \eqref{PD:YY'} and \eqref{PD:YY'2} are equivalent. 

We first assume that \eqref{PD:XY} holds. 
Then \(Y = f(\XX)\) almost surely for some measurable function \(f\), hence \(\PP(Y\leq y \, | \, \XX) = \mathds{1}_{(-\infty,y]} (f(\XX))\), and \eqref{MP} and \eqref{MP:DF} yield
\begin{align*}
  \PP(Y \leq y, Y' \leq y') 
  & = \E \big ( \PP(Y\leq y \, | \, \XX) \, \PP(Y'\leq y' \, | \, \XX) \big) 
    = \E \big ( \mathds{1}_{(-\infty,y]} (f(\XX)) \, \mathds{1}_{(-\infty,y']} (f(\XX)) \big) 
  \\
  & = \E \big ( \mathds{1}_{(-\infty,\min\{y,y'\}]} (f(\XX)) \big)
    = \E \big ( \PP(Y\leq \min\{y,y'\} \, | \, \XX) \big) 
  \\
  & = \PP(Y\leq \min\{y,y'\}) 
    = \min\{F_Y(y), F_Y(y')\}
    = \min\{F_Y(y), F_{Y'}(y')\}
\end{align*}
for all \((y,y') \in \mathbb{R}^2\), where the last identity is due to the fact that $Y \eqd Y'$. Thus, \(Y\) and \(Y'\) are comonotone. 

We now prove that \eqref{PD:YY'} implies \eqref{PD:XY}. 
Using \eqref{MP:DF} again gives
\begin{align*}
\hspace{-10mm}
  \PP (Y \leq y) 
  & = \min\{F_Y(y), F_{Y'}(y)\} 
    = \PP(Y \leq y, Y' \leq y) 
    = \E \big ( \PP(Y\leq y \, | \, \XX)^2 \big) 
   \leq \E \big ( \PP(Y\leq y \, | \, \XX) \big) 
    = \PP(Y \leq y) 
\end{align*}
for all \(y \in \mathbb{R}\).
This then implies 
$$ 0 = \E ( \PP(Y\leq y \, | \, \XX) - \PP(Y\leq y \, | \, \XX)^2 ) 
    = \E ( \PP(Y\leq y \, | \, \XX) \, (1 - \PP(Y \leq y \, | \, \XX)) ) $$ and hence
\begin{align} \label{Thm.Charact:PD.Eq}
  \PP(Y\leq y \, | \, \XX=\xx) \in \{0,1\}    
\end{align}
 for all $y \in \R$ and \(\PP^\XX\)-almost all \(\xx \in \R^p\).
Now, for \(\xx \in G\) with \(\PP^\XX(G) = 1\), define 
\begin{align*}
    a_\xx   
    &:= \sup \left\{ y \in \mathbb{R} \, : \, \PP^{Y|\XX=\xx} ((-\infty,y]) = 0 \right\}
    & b_\xx 
    &:= \inf \left\{ y \in \mathbb{R} \, : \, \PP^{Y|\XX=\xx} ((-\infty,y]) = 1 \right\}\,,
\end{align*}
so $a_\xx \leq b_\xx$. Assume $a_\xx < b_\xx$.
Then either \(\PP(Y \leq a_\xx \,|\, \XX=\xx) = 0\) and
\(\PP(Y \leq z \,|\, \XX=\xx) \in (0,1)\) for all \(z \in (a_\xx,b_\xx)\) and hence
\(\PP(Y \in (a_\xx,b_\xx) \,|\, \XX=\xx) > 0\),
or \(\PP(Y \leq z \,|\, \XX=\xx) \in (0,1)\) for all \(z \in [a_\xx,b_\xx)\) and hence
\(\PP(Y \in [a_\xx,b_\xx) \,|\, \XX=\xx) > 0\). Both cases contradict \eqref{Thm.Charact:PD.Eq}.
Therefore, $a_\xx = b_\xx$.
Thus, for almost every \(\xx \in \mathbb{R}^p\) there exists some constant \(c_\xx \in \mathbb{R}\) such that \(\PP(Y = c_\xx\,|\, \XX=\xx) = 1\), i.e.~\(Y\) is almost surely a function of \(\XX\).

Finally, \eqref{PD:YY'2} clearly implies \eqref{PD:YY'},
and the converse direction is due to the fact that \eqref{PD:YY'} implies \(\PP^{(Y,Y')} = \PP^{T(Y)}\) with $T(z):= (z,z)$ from which 
\begin{align*}
    \PP(Y=Y')
    & 
      =  \int_{\R^2} \mathds{1}_{\{(z,z') \in \R^2: z = z'\}} (y,y') \de \PP^{T(Y)}(y,y')
      =  \int_{\R} \mathds{1}_{\{(z,z') \in \R^2: z = z'\}} (y,y) \de \PP^{Y}(y)
      = 1
\end{align*}
immediately follows, where the second identity is due to change of coordinates.
This proves \eqref{PD:YY'2}.
\end{proof}

We proceed with a characterization of complete separation in terms of ordinal sum structures on $[0,1]^2$. 
Since $\Lambda(Y|\XX)$ remains unchanged when replacing the random variable $Y$ by its individual distributional transform due to \cite[Proposition 2.8]{fuchs2025}, 
i.e.~$\Lambda(Y|\XX) = \Lambda(F_Y(Y) \,|\, \XX)$, in what follows we work with $(\XX,F_Y(Y))$ and its Markov product \((F_Y(Y),F_Y(Y'))\). 
We note in passing that the mentioned invariance also applies to $\xi$ \cite[Proposition 2.4]{ansari2023MFOCI} but not to $R^2$ (recall Example \ref{Vkick}).

Inspired by the definition of an ordinal sum developed for copulas (see, e.g., \cite{durante2016}), we here propose a generalization for arbitrary distributions on $[0,1]^2$:
Let \(N\) be a finite or countably finite subset of the natural numbers. 
Further, let \(\{(a_k,b_k]\}_{k \in N}\) be a family of non-overlapping subintervals of \([0,1]\) with $0 \leq a_k < b_k \leq 1$ for all \(k \in N\).
Then the \emph{ordinal sum} \(F = (\langle (a_k,b_k], F_k \rangle)_{k \in N}\) of size $|N|$ for a sequence of $2$-dimensional distribution functions \((F_k)_{k \in N}\), each \(F_k\) having support in $(a_k,b_k]^2$, $k \in N$, is defined, for \((u,v) \in [0,1]^2\), by 
\begin{align}\label{Def:OSum}
  F(u,v) 
  & := \sum_{k \in N} (b_k - a_k) \, F_k (u,v)
       + \lambda\left( [0,\min\{u,v\}] \backslash \bigcup_{k \in N} (a_k,b_k] \right] \,.
\end{align}
The simplest ordinal sum structure is the one of order $0$, i.e.~$F(u,v)=\min\{u,v\}$ for all $(u,v) \in [0,1]^2$. This refers to a comonotonic distribution function on $[0,1]^2$. 
An ordinal sum representation is generally not unique as illustrated in Fig.~\ref{OSP}.
\begin{figure}[t]
  \centering
  \includegraphics[width=0.90\linewidth]{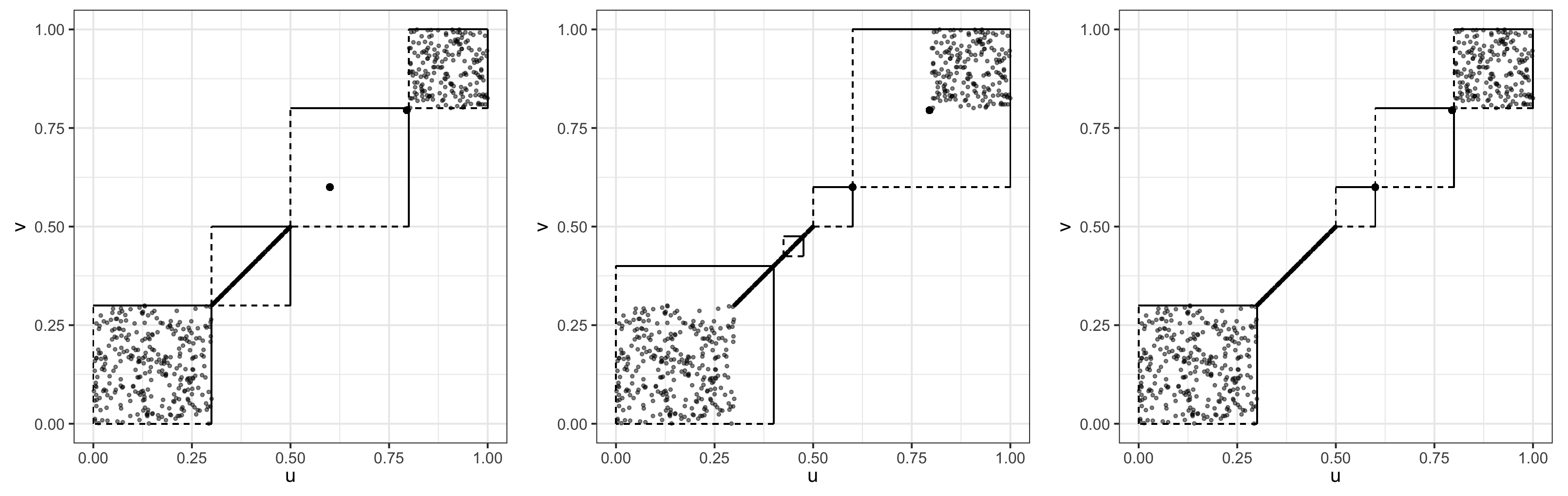}
  \caption{Scatterplot of sample size $2000$ of a distribution on $[0,1]^2$ with three different suitable ordinal sum representations.} 
  \label{OSP}
\end{figure}
In fact, every distribution function $F = (\langle (0,1], F \rangle)_{k \in N}$ with $N = \{1\}$ is an ordinal sum of size $1$ (and hence of trivial structure). 

Theorem \ref{Thm.Charact:CS} below states that complete separation of $Y$ relative to $\XX$ translates into a specific ordinal sum structure for $(F_Y(Y),F_Y(Y'))$ with the size of the ordinal sum referring to the number of discrete points of the vector $\XX$.

\begin{theorem}[Characterizing complete separation] \label{Thm.Charact:CS}
Consider the random vectors $(\XX,Y)$, $(\XX,F_Y(Y))$ and its Markov product $(F_Y(Y),F_Y(Y'))$.
Define $M := \{\zz \in \mathbb{R}^p \,:\, \PP(\XX=\zz)>0\}$.
Then the following statements are equivalent:
\begin{enumerate}[(a)]
\item \(\Lambda(Y|\XX) = 1\).
\item\label{CS:XY} \(Y\) is completely separated relative to \(\XX\).
\item\label{CS:YY'} \((F_Y(Y),F_Y(Y'))\) admits an ordinal sum structure \((\langle (a_\zz,b_\zz], F_\zz \rangle)_{\zz \in M}\) of size $|M|$ such that, for every $\zz \in M$, $b_\zz - a_\zz = \PP(\XX=\zz)$ and $F_\zz(u,v) = F_\zz(u,b_\zz) \cdot F_\zz(b_\zz,v)$ for all $(u,v) \in (a_\zz,b_\zz]^2$.
\end{enumerate}
\end{theorem}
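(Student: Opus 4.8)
The plan is to establish the two implications \eqref{CS:XY}$\Rightarrow$\eqref{CS:YY'} and \eqref{CS:YY'}$\Rightarrow$\eqref{CS:XY}; combined with the equivalence of $\Lambda(Y\mid\XX)=1$ and \eqref{CS:XY} recorded in the introduction, this proves the theorem. Throughout I write $U:=F_Y(Y)$, $U':=F_Y(Y')$, $G_\xx:=\PP(U\le\,\cdot\mid\XX=\xx)$ and $p_\zz:=\PP(\XX=\zz)$. Since $\Lambda(Y\mid\XX)=\Lambda(F_Y(Y)\mid\XX)$, complete separation of $Y$ relative to $\XX$ coincides with that of $U$, i.e.\ $\Psi(\PP^{U\mid\XX=\xx_1},\PP^{U\mid\XX=\xx_2})\in\{0,1\}$ for $\PP^\XX\otimes\PP^\XX$-almost all $(\xx_1,\xx_2)$; the reformulation I will lean on is that $\Psi\in\{0,1\}$ is equivalent to the two conditional supports being \emph{disjoint and linearly ordered} in $[0,1]$. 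By \eqref{MP:DF} the joint distribution function is $H(u,v)=\E\big(G_\XX(u)\,G_\XX(v)\big)$, so conditionally on $\XX=\xx$ the pair $(U,U')$ has the product law $G_\xx\otimes G_\xx$.

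For \eqref{CS:XY}$\Rightarrow$\eqref{CS:YY'} I would arrange the disjoint, ordered supports along $[0,1]$. For each atom $\zz\in M$ the transform sends $\supp\PP^{U\mid\XX=\zz}$ to an interval $(a_\zz,b_\zz]$, and disjointness forces $b_\zz-a_\zz=\PP\big(U\in(a_\zz,b_\zz]\big)=p_\zz$; conditioning on $\XX=\zz$ and using conditional independence yields the block $F_\zz:=G_\zz\otimes G_\zz$, supported in $(a_\zz,b_\zz]^2$ and obeying $F_\zz(u,v)=F_\zz(u,b_\zz)\,F_\zz(b_\zz,v)$. Off the atoms the conditionals must be degenerate: a support of positive diameter would span an open interval that, by the ordering, every other conditional avoids, so that this interval carries no $U$-mass; as these intervals are pairwise disjoint, only countably many can have positive length, whence $\PP^{U\mid\XX=\xx}=\delta_{s(\xx)}$ for $\PP^\XX$-almost every non-atomic $\xx$. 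There $(U,U')=(s(\XX),s(\XX))$ sits on the diagonal, and since $F_Y$ transports the continuous part of $Y$ to a uniform law, this contribution to $H$ is exactly $\lambda\big([0,\min\{u,v\}]\setminus\bigcup_{\zz}(a_\zz,b_\zz]\big)$. Collecting the blocks and the diagonal gives the ordinal sum in \eqref{CS:YY'}.

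For \eqref{CS:YY'}$\Rightarrow$\eqref{CS:XY} I would recover the supports from the structure. The product blocks force $\supp H\subseteq\bigcup_{\zz}(a_\zz,b_\zz]^2\cup\{(t,t):t\notin\bigcup_{\zz}(a_\zz,b_\zz]\}$, and as $(\supp G_\xx)^2\subseteq\supp H$ for almost every $\xx$, each conditional is concentrated inside a single block $(a_\zz,b_\zz]$ or at a single point of the non-atomic diagonal part. That part being atomless, every atom of $\XX$ lands in a block; comparing the total block length $\sum_{\zz}(b_\zz-a_\zz)=\PP(\XX\in M)$ with the total atomic mass shows the continuous part maps into the diagonal and the blocks are filled by atoms alone, block $\zz$ carrying mass exactly $p_\zz$. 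Since the atoms are indivisible and the block capacities equal the atom masses, a largest-mass-first argument forces a single atom per block; distinct atoms thus occupy distinct blocks with disjoint supports, while the continuous part embeds injectively into distinct diagonal points (a positive-measure fibre would create an atom on the atomless diagonal). Hence for almost all pairs the conditional supports are disjoint and ordered, i.e.\ \eqref{CS:XY} holds.

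The main obstacle in both directions is the continuous part of $\XX$. For \eqref{CS:XY}$\Rightarrow$\eqref{CS:YY'} the delicate steps are the degeneracy of the off-atom conditionals (the ``uncountably many disjoint gaps'' argument) and the verification that the distributional transform renders this degenerate piece as precisely the Lebesgue term, rather than some other atomless law. For \eqref{CS:YY'}$\Rightarrow$\eqref{CS:XY} the corresponding difficulty is the indivisibility/mass-matching argument pinning each block to a single atom; once single occupancy and injectivity on the diagonal are secured, disjointness of the supports---and with it complete separation---is immediate.
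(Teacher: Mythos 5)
Your forward direction ((\ref{CS:XY})$\Rightarrow$(\ref{CS:YY'})) is essentially the paper's argument: pairwise disjoint, ordered enclosing intervals for the conditional supports, blocks $(a_\zz,b_\zz]$ of length $\PP(\XX=\zz)$ attached to the atoms, degeneracy of the off-atom conditionals via the ``only countably many disjoint intervals of positive length'' observation, and identification of the residual contribution with the Lebesgue term. One caution there: the justification is not that ``$F_Y$ transports the continuous part to a uniform law'' (in general $F_Y(Y)$ is not uniform); what is actually used, and what the paper states up front, is $\PP(F_Y(Y)\le u)\le u$ with equality for $u\in\supp(\PP^{F_Y(Y)})$. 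The converse is where you genuinely diverge. The paper never touches supports: it exploits the identities $\PP(F_Y(Y)\in(a_\zz,b_\zz])=\PP((F_Y(Y),F_Y(Y'))\in(a_\zz,b_\zz]^2)$ and, for off-block $u$, $\PP(F_Y(Y)\le u)=\PP(F_Y(Y)\le u,\,F_Y(Y')\le u)$, which after disintegration take the form $\E(g-g^2)=0$ and force the conditional probabilities to be $\{0,1\}$-valued, from which separation is read off. You instead confine each conditional via $\supp\big(\PP^{F_Y(Y)|\XX=\xx}\otimes\PP^{F_Y(Y)|\XX=\xx}\big)\subseteq\supp\PP^{(F_Y(Y),F_Y(Y'))}$ to a single block or a single off-block diagonal point, and then run a mass-matching argument (block capacities $b_\zz-a_\zz=\PP(\XX=\zz)$ form the same multiset as the atom masses, atoms are indivisible, so a greedy induction over the decreasing sequence of distinct mass values forces one atom per block). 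Both routes work; the paper's is cleaner because working with distribution functions and half-open intervals sidesteps the closure and boundary issues your support argument inherits (supports land in $[a_\zz,b_\zz]$ rather than $(a_\zz,b_\zz]$, adjacent blocks may share endpoints, and ``disjoint for almost every pair'' must be upgraded to an actual disjoint family, e.g.\ by a rational-witness argument), whereas yours makes the combinatorial content --- why exactly one atom can occupy each block --- more transparent. The greedy matching step is only gestured at in your write-up and should be spelled out, but it does go through.
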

\begin{proof}
It remains to prove that \eqref{CS:XY} and \eqref{CS:YY'} are equivalent.
We note in passing that $F_{F_Y(Y)}(u) = $ \linebreak $\PP(F_Y(Y) \leq u) \leq u$ for all $u \in [0,1]$ and
\begin{align}\label{Thm.Charact:CS.5}
    \PP(F_Y(Y) \leq u) = u
\end{align}
for all $u \in \supp(\PP^{F_Y(Y)})$, the support of $\PP^{F_Y(Y)}$.

Assume first that \(Y\) is completely separated relative to \(\XX\).
Then \(F_Y(Y)\) is completely separated relative to \(\XX\) due to \cite[Proposition 2.8]{fuchs2025}, i.e.~the relative effect 
\begin{align}\label{Thm.Charact:CS.1}
  \Psi \big(\PP^{F_Y(Y)|\XX=\xx_1}, \PP^{F_Y(Y)|\XX=\xx_2}\big) \in \{0,1\}
\end{align}
for almost all $(\xx_1,\xx_2)$ with $\xx_1 \neq \xx_2$. \pagebreak
\\
In a first step, we determine the sequence \(\{(a_\zz,b_\zz]\}_{\zz \in M}\) of non-overlapping intervals.
For $\xx \in G := \supp(\PP^\XX)$ denote by 
\begin{align*}
  l(\xx)
  & := \inf(\supp(\PP^{F_Y(Y) |\XX = \xx})) 
  & u(\xx)
  & := \sup(\supp(\PP^{F_Y(Y) |\XX = \xx}))   
\end{align*}
the infimum and supremum of the support of $\PP^{F_Y(Y) |\XX = \xx}$.
Then \eqref{Thm.Charact:CS.1} implies  
\begin{align}\label{Thm.Charact:CS.1A}
    (l(\xx_1),u(\xx_1)) \cap (l(\xx_2),u(\xx_2)) = \emptyset
\end{align}
for almost all $\xx_1,\xx_2 \in G$ with $\xx_1 \neq \xx_2$.
We distinguish two cases: 
\begin{enumerate}
    \item 
    For $\zz \in M \subseteq G$, set $b_\zz := u(\zz)$. 
    If there exists some $\xx \in G$ such that $u(\ww) \leq u(\xx) \leq l(\zz)$ for all $\ww \in G$, then define $a_\zz := u(\xx)$. Otherwise, set $a_\zz := 0$.
    Then, according to \eqref{Thm.Charact:CS.1A}, 
    \begin{align}\label{Thm.Charact:CS.2}
        (a_{\zz_1},b_{\zz_1}] \cap (a_{\zz_2},b_{\zz_2}] = \emptyset 
    \end{align} 
    for all $\zz_1,\zz_2 \in M$ with $\zz_1 \neq \zz_2$.
    
    \item 
    \eqref{Thm.Charact:CS.1A} further implies that for almost all $\xx \in G \backslash M$ the conditional distributions $\PP^{F_Y(Y) |\XX = \xx}$ are degenerate with \(\PP^{F_Y(Y) |\XX = \xx}(\{u(\xx)\})=1\); 
    otherwise this would contradict the fact that $\PP(\XX=\xx) = 0$ for all $\xx \in G\backslash M$. 
    Thus, there exists some function $u: G \backslash M \to [0,1]\backslash \bigcup_{\zz \in M} (a_\zz,b_\zz]$ such that $F_Y(Y) = u(\XX)$. 
    According to \eqref{Thm.Charact:CS.1}
    \begin{align}\label{Thm.Charact:CS.3}
      \qquad u(\xx_1) \neq u(\xx_2)
    \end{align} 
    for almost all $\xx_1,\xx_2 \in G \backslash M$ with $\xx_1 \neq \xx_2$.
\end{enumerate}
It then follows from \eqref{Thm.Charact:CS.1} that also 
\begin{align}\label{Thm.Charact:CS.4} 
  (a_\zz,b_\zz] \cap \{u(\xx)\} = \emptyset
\end{align} 
for all $\zz \in M$ and almost all $\xx \in  G \backslash M$.
\\
In a second step, we verify the ordinal sum structure.
For $\zz \in M$, disintegration together with \eqref{MP} yields
\begin{align*}
    \PP^{(F_Y(Y),F_Y(Y'))} ((a_\zz,b_\zz]^2)
    & = \int_{\R^p} \PP^{(F_Y(Y),F_Y(Y')) |\XX = \xx} ((a_\zz,b_\zz]^2) \de \PP^{\XX}(\xx) \notag
    \\
    & = \int_{\R^p} \PP^{F_Y(Y)|\XX = \xx} ((a_\zz,b_\zz])^2 \de \PP^{\XX}(\xx) \notag
    \\
    & = \PP(\XX = \zz) \, \PP^{F_Y(Y)|\XX = \zz} ((a_\zz,b_\zz])^2 
      = \PP(\XX = \zz)\,,
\end{align*}
where the second last identity is due to \eqref{Thm.Charact:CS.2}, \eqref{Thm.Charact:CS.3} and \eqref{Thm.Charact:CS.4}, and the last identity follows from \linebreak \(\PP^{F_Y(Y)|\XX = \zz} ((a_\zz,b_\zz]) = 1\).
For the same reason \(\PP^{F_Y(Y)} ((a_\zz,b_\zz]) = \PP(\XX = \zz)\) and hence 
$ \PP(\XX = \zz)
    = \PP^{F_Y(Y)} ((a_\zz,b_\zz])
    = \PP(F_Y(Y) \leq b_\zz) - \PP(F_Y(Y) \leq a_\zz)
    = b_\zz - a_\zz$,
where the last identity is due to \eqref{Thm.Charact:CS.5}.
For all $(u,v) \in [0,1]^2$, disintegration and \eqref{MP} then yield 
\begin{align} \label{Thm.Charact:CS.6} 
  &   \PP(F_Y(Y) \leq u, F_Y(Y') \leq v)
    = \int_{\R^p} \PP(F_Y(Y) \leq u \,|\, \XX = \xx) \, \PP(F_Y(Y) \leq v \,|\, \XX = \xx) \de \PP^{\XX}(\xx) \notag
  \\
  & = \sum_{\zz \in M} \underbrace{\PP(\XX = \zz)}_{= b_\zz - a_\zz} \, \PP(F_Y(Y) \leq u \,|\, \XX = \zz) \, \PP(F_Y(Y) \leq v \,|\,\XX = \zz) \notag
  \\
  & \qquad\qquad  + \int_{G \backslash M} 
              \underbrace{\PP(F_Y(Y) \leq u \,|\, \XX = \xx)}_{\eqref{Thm.Charact:CS.3} = \mathds{1}_{[0,u]}(u(\xx))} \, 
              \underbrace{\PP(F_Y(Y) \leq v \,|\, \XX = \xx)}_{\eqref{Thm.Charact:CS.3} = \mathds{1}_{[0,v]}(u(\xx))} \de \PP^{\XX}(\xx) \notag
  \\
  & = \sum_{\zz \in M} (b_\zz - a_\zz) \, \PP(F_Y(Y) \leq u \,|\, \XX = \zz) \, \PP(F_Y(Y) \leq v \,|\,\XX = \zz) 
       + \int_{G \backslash M} \mathds{1}_{[0,\min\{u,v\}]}(u(\xx)) \de \PP^{\XX}(\xx)\,.
\end{align}
Setting \(F_\zz(u,v) := \PP(F_Y(Y) \leq u \,|\, \XX = \zz) \, \PP(F_Y(Y) \leq v \,|\,\XX = \zz)\) for all $(u,v) \in [0,1]^2$ and all $\zz \in M$ yields a sequence of $2$-dimensional distribution functions each with support in $(a_\zz, b_\zz]^2$ and such that \(F_\zz(u,v) = F_\zz(u,b_\zz) \cdot F_\zz(b_\zz,v)\) for all $(u,v) \in (a_\zz,b_\zz]^2$. This gives the first term in \eqref{Def:OSum}.
For the second term, we first observe that for all $ u \in [0,1]\backslash \bigcup_{\zz \in M} (a_\zz,b_\zz]$ we have $\PP(F_Y(Y) \leq u) = u = \lambda([0,u])$ due to \eqref{Thm.Charact:CS.5}.
Then, for every $u \in [0,1]\backslash \bigcup_{\zz \in M} (a_\zz,b_\zz]$, 
from \eqref{Thm.Charact:CS.6} and change of coordinates we obtain
\begin{align} \label{Thm.Charact:CS.8} 
  & \lambda \left( [0,u] \backslash \bigcup_{\zz \in M} (a_\zz,b_\zz] \right)
    = \lambda([0,u]) - \lambda \left( \bigcup_{\zz \in M, b_\zz < u} (a_\zz,b_\zz] \right)
  \notag
  \\
  & = \PP(F_Y(Y) \leq u) - \lambda \left( \bigcup_{\zz \in M, b_\zz < u} (a_\zz,b_\zz] \right)
  \notag
  \\
  & = \sum_{\zz \in M, b_\zz < u} \underbrace{(b_\zz - a_\zz)}_{= \lambda ((a_\zz,b_\zz])} \, \underbrace{\PP(F_Y(Y) \leq u \,|\, \XX = \zz)}_{=1}
       + \int_{G \backslash M} \mathds{1}_{[0,u]}(u(\xx)) \de \PP^{\XX}(\xx)
       - \lambda \left( \bigcup_{\zz \in M, b_\zz < u} (a_\zz,b_\zz] \right)
  \notag
  \\
  & = \lambda \left( \bigcup_{\zz \in M, b_\zz < u} (a_\zz,b_\zz] \right) \,  
       + \int_{[0,1]\backslash \bigcup_{\zz \in M} (a_\zz,b_\zz]} \mathds{1}_{[0,u]}(t) \de \underbrace{\PP^{u(\XX)}}_{ \eqref{Thm.Charact:CS.3} = \PP^{F_Y(Y)}}(\xx)
       - \lambda \left( \bigcup_{\zz \in M, b_\zz < u} (a_\zz,b_\zz] \right)
  \nonumber
  \\
  & = \PP^{F_Y(Y)} \left( [0,u]\backslash \bigcup_{\zz \in M} (a_\zz,b_\zz] \right)\,.
\end{align}  
If, instead, $u \in \bigcup_{\zz \in M} (a_\zz,b_\zz]$ then there exists a largest $u^\ast \in [0,1]\backslash \bigcup_{\zz \in M} (a_\zz,b_\zz]$ such that $u^\ast \leq u$ and hence 
\begin{align} \label{Thm.Charact:CS.9} 
  \lambda \left( [0,u] \backslash \bigcup_{\zz \in M} (a_\zz,b_\zz] \right)
  & = \lambda \left( [0,u^\ast] \backslash \bigcup_{\zz \in M} (a_\zz,b_\zz] \right)
  \notag
  \\ 
  & = \PP^{F_Y(Y)} \left( [0,u^\ast]\backslash \bigcup_{\zz \in M} (a_\zz,b_\zz] \right)
    = \PP^{F_Y(Y)} \left( [0,u]\backslash \bigcup_{\zz \in M} (a_\zz,b_\zz] \right)\,.
\end{align}  
For the second term in \eqref{Thm.Charact:CS.6} we thus have
\begin{align*}
  \int_{G \backslash M} \mathds{1}_{[0,\min\{u,v\}]}(u(\xx)) \de \PP^{\XX}(\xx)
  & = \int_{[0,1]\backslash \bigcup_{\zz \in M} (a_\zz,b_\zz]} \mathds{1}_{[0,\min\{u,v\}]}(t) \de \PP^{u(\XX)}(t)
  \\
  & = \int_{[0,1]\backslash \bigcup_{\zz \in M} (a_\zz,b_\zz]} \mathds{1}_{[0,\min\{u,v\}]}(t) \de \PP^{F_Y(Y)}(t)
  \\
  & = \lambda \left( [0,\min\{u,v\}]\backslash \bigcup_{\zz \in M} (a_\zz,b_\zz] \right)\,,
\end{align*}
where we use change of coordinates together with \eqref{Thm.Charact:CS.8} and \eqref{Thm.Charact:CS.9}.
Therefore, \((F_Y(Y),F_Y(Y'))\) admits an ordinal sum structure \((\langle (a_\zz,b_\zz], F_\zz \rangle)_{\zz \in M}\) of size $|M|$ such that, for every $\zz \in M$, $b_\zz - a_\zz = \PP(\XX=\zz)$ and $F_\zz(u,v) = F_\zz(u,b_\zz) \, F_\zz(b_\zz,v)$ for all $(u,v) \in (a_\zz,b_\zz]^2$.

Now, assume that \eqref{CS:YY'} holds.
In what follows we verify that the supports of the conditional distributions are separated.
For $\zz \in M$, a direct use of the ordinal sum structure first gives
$ \PP((F_Y(Y),F_Y(Y')) \in (a_\zz,b_\zz]^2) \linebreak 
  = \PP(\XX=\zz) 
  = \PP(F_Y(Y) \in (a_\zz,b_\zz])$,
and disintegration together with \eqref{MP} then yields
\begin{align*}
  0 
  & = \PP(F_Y(Y) \in (a_\zz,b_\zz]) - \PP((F_Y(Y),F_Y(Y')) \in (a_\zz,b_\zz]^2)
  \\
  & = \int_{\R^p} \PP(F_Y(Y) \in (a_\zz,b_\zz] | \XX=\xx) - \PP((F_Y(Y),F_Y(Y')) \in (a_\zz,b_\zz]^2 | \XX=\xx) \de \PP^\XX(\xx)
  \\
  & = \int_{\R^p} \PP(F_Y(Y) \in (a_\zz,b_\zz] | \XX=\xx) - \PP(F_Y(Y) \in (a_\zz,b_\zz] | \XX=\xx)^2 \de \PP^\XX(\xx)\,.
\end{align*}
Hence \(\PP(F_Y(Y) \in (a_\zz,b_\zz] | \XX=\xx) \in \{0,1\}\) for $\PP^\XX$-almost all $\xx \in \R^p$.
Since
$ 0 
  < \PP(\XX=\zz) \linebreak
  = \PP(F_Y(Y) \in (a_\zz,b_\zz])
  = \PP(F_Y(Y) \in (a_\zz,b_\zz] | \XX=\zz) \, \PP(\XX=\zz)$,
it follows that \(\PP(F_Y(Y) \in (a_\zz,b_\zz] | \XX=\zz) = 1\) for all $\zz \in M$.
\\ 
We now consider the set $G \backslash M$ with $G := \supp(\PP^\XX)$ and verify that the supports of the conditional distributions of $F_Y(Y)$ given $\XX=\xx$ for almost all $\xx \in G \backslash M$ are also separated. 
First, notice that by assumption
\begin{align*}
    \PP \left(F_Y(Y) \in [0,1] \backslash \bigcup_{\zz \in M} (a_\zz,b_\zz] \right)
    & = \PP (\XX \in G \backslash M)\,,
\end{align*}
and consider $u \in [0,1] \backslash \bigcup_{\zz \in M} (a_\zz,b_\zz]$.
Then, using again the ordinal sum structure gives \linebreak
$ \PP(F_Y(Y) \leq u, F_Y(Y') \leq u) 
  = \PP(F_Y(Y) \leq u) 
  = u$,
and disintegration together with \eqref{MP} then yields
\begin{align*}
  0 
  & = \PP(F_Y(Y) \leq u) - \PP(F_Y(Y) \leq u, F_Y(Y') \leq u) 
  \\
  & = \int_{\R^p} \PP(F_Y(Y) \leq u \,|\, \XX=\xx) - \PP(F_Y(Y) \leq u, F_Y(Y') \leq u \,|\, \XX=\xx) \de \PP^\XX(\xx)
  \\
  & = \int_{\R^p} \PP(F_Y(Y) \leq u \,|\, \XX=\xx) - \PP(F_Y(Y) \leq u \,|\, \XX=\xx)^2 \de \PP^\XX(\xx)\,.
\end{align*}
Hence \(\PP(F_Y(Y) \leq u \,|\, \XX=\xx) \in \{0,1\}\) for $\PP^\XX$-almost all $\xx \in \R^p$.
Now, similar to what has been shown in the proof of Theorem \ref{Thm.Charact:PD} \eqref{PD:YY'} to \eqref{PD:XY} we conclude that for almost all \(\xx \in G \backslash M\) there exists some constant \(u_\xx \in [0,1] \backslash \bigcup_{\zz \in M} (a_\zz,b_\zz]\) such that \(\PP(F_Y(Y) = u_\xx\,|\, \XX=\xx) = 1\). And it follows from the ordinal sum structure that $u_{\xx_1} \neq u_{\xx_2}$ for almost all $\xx_1,\xx_2 \in G \backslash M$ with $\xx_1 \neq \xx_2$.
Therefore, the supports of the conditional distributions of $F_Y(Y)$ given $\XX=\xx$ for almost all $\xx \in G$ are separated, and thus, $F_Y(Y)$ is completely separated relative to $\XX$, but this is equivalent to $Y$ being completely separated relative to $\XX$ due to \cite[Proposition 2.8]{fuchs2025}.
\end{proof}

To illustrate our findings, we demonstrate how the two aforementioned maximum value characterizing dependence concepts are interrelated:
As shown in \cite{fuchs2025} in terms of the random vector $(\XX,Y)$, complete separation and perfect dependence are generally not connected.  
To complete the picture, in Examples \ref{Trans} and \ref{Ex.REM.PS.vs.PD} we now illustrate the general lack of connection between perfect dependence and complete separation also by means of the Markov product.

\begin{example}[Complete separation does not imply perfect dependence]~~ \label{Trans}
Consider the random variable $X$ with $\PP(X=-1) = 1 - \PP(X=1) = 1/3$ and the random variable $Y$ given by the conditional distributions 
\begin{align*}
    P^{Y|X=-1}=\mathcal{U}[-1,0]
    \qquad 
    \textrm{ and } 
    \qquad 
    P^{Y|X=1} = Z\cdot\mathcal{U}[1,3]+(1-Z) 
\end{align*}
with $Z$ being independent of $(X,Y)$ and $\PP(Z=0) = 1/2 = \PP(Z=1)$.
Fig.~\ref{E3P} depicts scatterplots of sample size 1000 for the random vectors $(X,Y)$, $(Y,Y')$, $(X,F_Y(Y))$ and $(F_Y(Y),F_Y(Y'))$.
Clearly, 
$(F_Y(Y),F_Y(Y'))$ admits an ordinal sum structure of size $2 = |\{z \in \mathbb{R} \,:\, \PP(X=z)>0\}|$.
However, $Y$ and $Y'$ are not equal almost surely.
Therefore, $Y$ is completely separated relative to $X$ but fails to be perfectly dependent on $X$. 
\end{example}

\begin{figure}[t]
  \centering
  \includegraphics[scale=0.14]{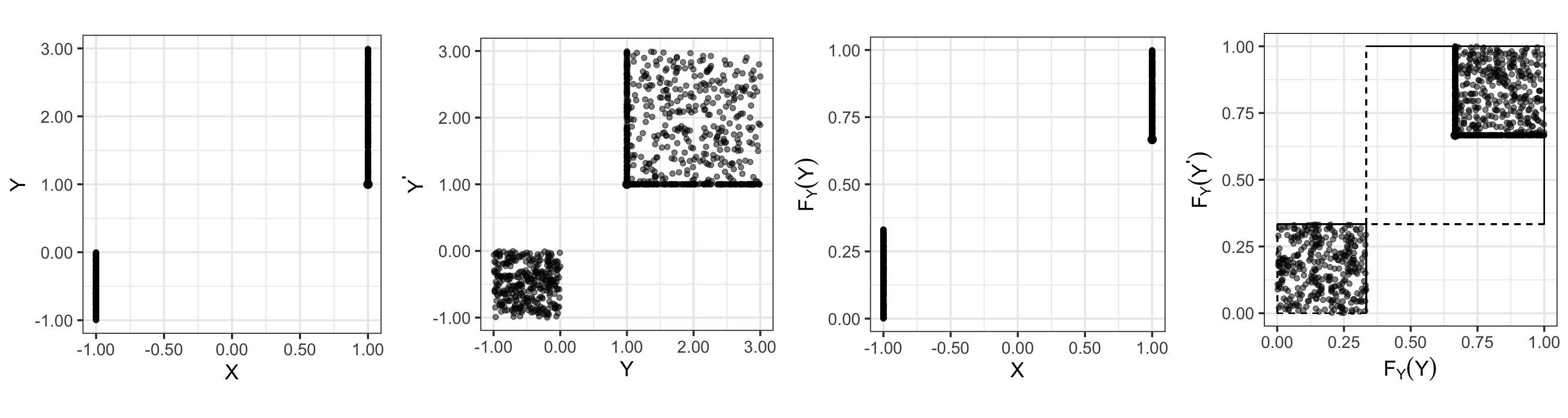}
  \caption{Scatterplots of sample size $1000$ showing $(X, Y)$, its Markov product $(Y,Y')$ and the transformation $(X, F_Y(Y))$ together with its corresponding Markov product $(F_Y(Y), F_Y(Y'))$ discussed in Example \ref{Trans}.}
  \label{E3P}
\end{figure}

\begin{example}[Perfect dependence does not imply complete separation]~~ \label{Ex.REM.PS.vs.PD}
Consider the random variables $X \sim \mathcal{U}([-1,0]\cup [1,3])$ and $Y = 3 \cdot \mathds{1}_{\{X \geq 1\}} - 1$. 
Fig.~\ref{E1P} depicts scatterplots of sample size $1000$ for the vectors $(X,Y)$, $(Y,Y')$, $(X,F_Y(Y))$ and $(F_Y(Y),F_Y(Y'))$.
Clearly, 
$Y=Y'$ almost surely.
However, $(F_Y(Y),F_Y(Y'))$ fails to admit an ordinal sum structure of size $0$, which is (due to the fact that $X$ has a continuous cdf) the only permissible ordinal sum structure for $Y$ to be stochastically comparable relative to $X$ according to Theorem \ref{Thm.Charact:CS}.
Therefore, $Y$ perfectly depends on $X$ but fails to be completely separated relative to $X$. 
\end{example}

\begin{figure}[t]
  \centering
  \includegraphics[scale=0.14]{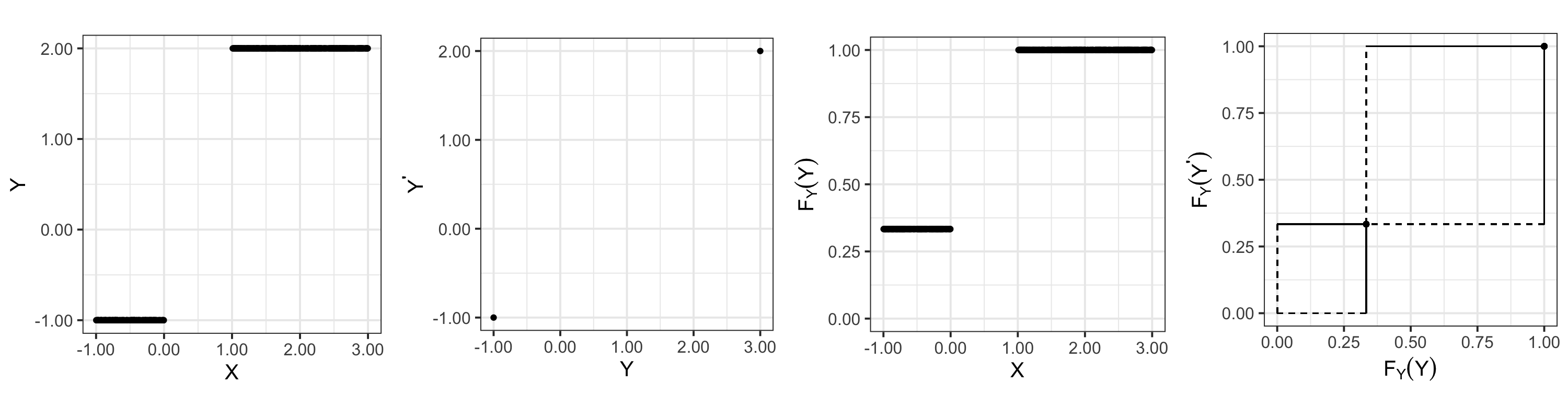}
  \caption{Scatterplots of sample size $1000$ showing $(X, Y)$, its Markov product $(Y,Y')$ and the transformation $(X, F_Y(Y))$ together with its corresponding Markov product $(F_Y(Y), F_Y(Y'))$ discussed in Example \ref{Ex.REM.PS.vs.PD}.}
  \label{E1P}
\end{figure}

If $Y$ has a continuous cdf, then perfect dependence implies complete separation as shown in Theorem 2.12 in \cite{fuchs2025}.
In contrast, if at least one of the coordinates of $\XX$ has a continuous cdf, Theorem \ref{Thm.Charact:CS} simplifies and complete separation implies perfect dependence.
The next result hence complements Theorem 2.11 in \cite{fuchs2025}.

\begin{corollary}[Characterizing complete separation] \label{Cor.Charact:CS}
Consider the random vectors $(\XX,Y)$, $(\XX,F_Y(Y))$ and its Markov product $(F_Y(Y),F_Y(Y'))$, and suppose that one of the coordinates of $\XX$ has a continuous cdf.
Then $\{\zz \in \mathbb{R}^p \,:\, \PP(\XX=\zz)>0\} = \emptyset$ and the following statements are equivalent:
\begin{enumerate}[(a)]
\item \(\Lambda(Y|\XX) = 1\).
\item \(Y\) is completely separated relative to \(\XX\).
\item \label{Cor.Charact:CS.c} \(\PP(F_Y(Y) \leq u, F_Y(Y') \leq v) = \min\{u,v\} = \lambda ([0,\min\{u,v\}])\) for all $(u,v) \in [0,1]^2$,
i.e.~$(F_Y(Y),F_Y(Y'))$ admits an ordinal sum structure of size $0$.
\end{enumerate}
In either case $F_Y(Y)$ and $F_Y(Y')$ are comonotone, 
and thus, $Y$ perfectly depends on $\XX$ with \(\xi(Y|\XX) = R^2(Y|\XX) = 1\).
\end{corollary}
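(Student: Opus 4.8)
The plan is to deduce everything from Theorem~\ref{Thm.Charact:CS} once the continuity hypothesis has been used to rule out atoms of $\XX$. First I would fix a coordinate $X_j$ of $\XX$ whose cdf is continuous. Then for every $\zz = (z_1,\dots,z_p) \in \R^p$ one has $\PP(\XX = \zz) \leq \PP(X_j = z_j) = 0$, so $M = \{\zz \in \R^p : \PP(\XX = \zz) > 0\} = \emptyset$. Consequently the index set in Theorem~\ref{Thm.Charact:CS}\eqref{CS:YY'} is empty, and evaluating the ordinal sum \eqref{Def:OSum} with $N = M = \emptyset$ (empty sum, empty union) collapses it to $F(u,v) = \lambda([0,\min\{u,v\}]) = \min\{u,v\}$ for all $(u,v) \in [0,1]^2$. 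Thus the ordinal sum structure of size $|M| = 0$ appearing in Theorem~\ref{Thm.Charact:CS}\eqref{CS:YY'} is precisely statement~\eqref{Cor.Charact:CS.c} of the corollary, and the equivalence of (a), (b) and (c) is then immediate from Theorem~\ref{Thm.Charact:CS}.

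It remains to establish the closing claim, namely that in either case $F_Y(Y)$ and $F_Y(Y')$ are comonotone and $Y$ perfectly depends on $\XX$. Assuming (c), I would first read off the marginals: setting $v = 1$ in \eqref{Cor.Charact:CS.c} gives $\PP(F_Y(Y) \leq u) = \min\{u,1\} = u$ for $u \in [0,1]$, so $F_Y(Y)$ is uniform on $[0,1]$, and likewise $F_Y(Y')$. Their joint distribution function $\min\{u,v\}$ is the Fréchet--Hoeffding upper bound, which is exactly comonotonicity. I would then apply Theorem~\ref{Thm.Charact:PD} to the variable $F_Y(Y)$, whose Markov product is $(F_Y(Y),F_Y(Y'))$: comonotonicity of $F_Y(Y)$ and $F_Y(Y')$ yields both $F_Y(Y) = F_Y(Y')$ almost surely and that $F_Y(Y)$ perfectly depends on $\XX$, whence $\xi(F_Y(Y)\,|\,\XX) = 1$.

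Finally I would transfer perfect dependence back to $Y$. Since $\xi$ is invariant under the individual distributional transform (\cite[Proposition 2.4]{ansari2023MFOCI}), we obtain $\xi(Y|\XX) = \xi(F_Y(Y)\,|\,\XX) = 1$, and by Chatterjee's characterization this is equivalent to $Y$ perfectly depending on $\XX$; then $R^2(Y|\XX) = 1$ follows as well, as recorded in Theorem~\ref{Thm.Charact:PD}. I expect this last transfer to be the main obstacle: because $F_Y$ need not be injective, the relation $F_Y(Y) = g(\XX)$ almost surely does not by itself force $Y = f(\XX)$ almost surely, so one cannot simply invert $F_Y$. Routing the argument through the distributional-transform invariance of $\xi$ is precisely what circumvents this difficulty. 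A minor point to verify carefully is the empty-sum and empty-union convention in \eqref{Def:OSum}, which is what makes an ordinal sum of size $0$ collapse to the comonotone distribution $\min\{u,v\}$.
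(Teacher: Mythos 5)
Your proposal is correct and follows essentially the same route as the paper: the equivalence is read off from Theorem~\ref{Thm.Charact:CS} once $M=\emptyset$ is established, and the closing claim is obtained by noting that the size-$0$ ordinal sum is the comonotone distribution, applying Theorem~\ref{Thm.Charact:PD} to $(\XX,F_Y(Y))$, and transferring back to $Y$ via the invariance $\xi(Y|\XX)=\xi(F_Y(Y)|\XX)$ from \cite[Proposition 2.4]{ansari2023MFOCI}. Your explicit remarks on why $M=\emptyset$, on the empty-sum convention in \eqref{Def:OSum}, and on the non-injectivity of $F_Y$ (which is exactly why the argument is routed through $\xi$ rather than inverting $F_Y$) merely spell out steps the paper leaves implicit.
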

\begin{proof}
The equivalence is immediate from Theorem \ref{Thm.Charact:CS}.
Moreover, from \eqref{Cor.Charact:CS.c} it follows that $F_Y(Y)$ and $F_Y(Y')$ are comonotone, and \cite[Proposition 2.4]{ansari2023MFOCI} together with Theorem \ref{Thm.Charact:PD} then gives $\xi(Y|\XX) = \xi(F_Y(Y)|\XX) = 1$, hence $Y$ perfectly depends on $\XX$, and thus, $R^2(Y|\XX) = 1$.
\end{proof}


To round off the discussion, we conclude with an example $(X,Y)$ where $Y$ is completely separated relative to $\XX$ and $(F_Y(Y),F_Y(Y'))$ admits an ordinal sum structure of infinite size.


\begin{example}\label{Kette}
Consider the random variable $X$ with 
$\PP(X = n) = \frac{1}{2^n}$, $n \in \mathbb{N}$, and the random variable $Y$ given by the conditional distributions 
\begin{align*}
  \PP^{Y|X=n} 
  & = \mathcal{U}\left(2- 2^{-n+3}, 2- 2^{-n+2}\right)\,.
\end{align*}
Then
\begin{align*}
   \PP^{(Y,Y')} 
   & = \sum_{n \in \mathbb{N}} \frac{1}{2^n} \; 
       \mathcal{U}\left( \left(2- 2^{-n+3},2- 2^{-n+2}\right)^2 \right) 
\end{align*}
and $(F_Y(Y),F_Y(Y'))$ admits an ordinal sum structure of infinite size.
Fig.~\ref{E2P} depicts scatterplots of sample size $1000$ for the random vectors $(X,Y)$, $(Y,Y')$, $(X,F_Y(Y))$ and $(F_Y(Y),F_Y(Y'))$.
According to Theorem \ref{Thm.Charact:CS}, $Y$ is completely separated relative to $\XX$.
\end{example}

\begin{figure}[t]
  \centering
  \includegraphics[scale=0.14]{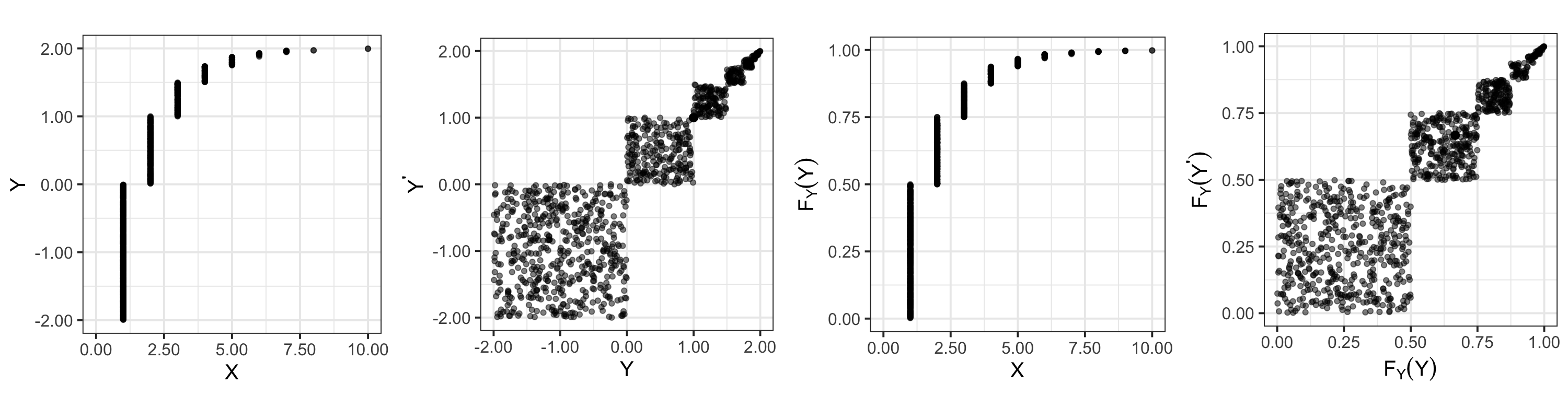}
  \caption{Scatterplots of sample size $1000$ showing $(X, Y)$, its Markov product $(Y,Y')$ and the transformation $(X, F_Y(Y))$ together with its corresponding Markov product $(F_Y(Y), F_Y(Y'))$ discussed in Example \ref{Kette}.}
  \label{E2P}
\end{figure}

\section*{Acknowledgments}

Both authors gratefully acknowledge the support of the Austrian Science Fund (FWF) project {P 36155-N} \emph{ReDim: Quantifying Dependence via Dimension Reduction}
and the support of the WISS 2025 project 'IDA-lab Salzburg' (20204-WISS/225/197-2019 and 20102-F1901166-KZP).


\end{document}